\documentclass[11pt]{article}

\usepackage[T1]{fontenc}
\usepackage{amsmath,amssymb,amsthm}
\usepackage{booktabs}
\usepackage[margin=0.85in]{geometry}
\usepackage{microtype}
\usepackage[hidelinks]{hyperref}
\hypersetup{
  pdftitle={Finite Configurations Cannot Generate a Constant Trace in Rule 30},
  pdfauthor={David Lee Condrey},
  pdfsubject={Rule 30 finite-support constant-trace theorems},
  pdfkeywords={cellular automata, Rule 30, temporal trace, symbolic dynamics}
}
\usepackage{tikz}
\usetikzlibrary{arrows.meta,positioning}
\usepackage{caption}
\definecolor{ruleblue}{RGB}{31,78,121}
\definecolor{rulegold}{RGB}{191,125,17}
\definecolor{ruleink}{RGB}{23,33,48}
\definecolor{ruleband}{RGB}{250,232,199}
\tikzset{
  rulecard/.style={draw=ruleblue!70!black, line width=0.7pt,
    rounded corners=4pt, fill=ruleblue!4, inner sep=7pt, align=center,
    minimum height=1.38cm},
  rulecardfill/.style={fill=ruleblue!4, rounded corners=2.4pt},
  rulecardline/.style={draw=ruleblue!70!black, line width=0.6pt,
    rounded corners=2.4pt},
  rulearrow/.style={-{Latex[length=2.8mm]}, very thick, rulegold!90!black},
}
\newcommand{\stlabel}[1]{{\small\textcolor{ruleblue!90!black}{\bfseries #1}}}

\newcommand{\stpanel}[3]{%
  \begin{tabular}[t]{@{}c@{}}
    \stlabel{#1}\\[0.5pt]
    {\scriptsize #2}\\[3.5pt]
    #3
  \end{tabular}}

\newtheorem{theorem}{Theorem}
\newtheorem{lemma}[theorem]{Lemma}
\newtheorem{corollary}[theorem]{Corollary}
\newcommand{\F}{\mathcal F}
\newcommand{\Tr}{\operatorname{Tr}}
\newcommand{\supp}{\operatorname{supp}}

\title{\vspace{-2.2em}Finite Configurations Cannot Generate a Constant Trace in Rule 30}
\author{David Lee Condrey\\WritersLogic, Inc.\\\texttt{david@writerslogic.com}}
\date{September 2026\vspace{-1.5em}}

\renewenvironment{abstract}{\small\quotation}{\endquotation}

\begin{document}
\maketitle

\begin{abstract}
For every right half we determine the unique left half whose Rule-30 central
trace is constant: an alternating tail selected by the leading one of the right
half when the initial center is $0$, and a single universal checkerboard when it
is $1$.  Every nonzero member of either fiber carries infinitely many ones, so
the zero row is the only finite configuration with a constant trace.  For
support radius $w$ the sharp maximum constant-prefix length is
$2\lceil w/2\rceil+1$ when the initial center is $0$ and $2\lfloor w/2\rfloor+2$
when it is $1$, so the maximum over both is $w+2$, attained by exactly $2^w$
configurations for even $w$ and $2^w-1$ for odd $w$.  Hence no column of a
nonzero finite Rule 30 orbit is eventually constant.
\end{abstract}

\noindent\textbf{Keywords.} cellular automata; Rule 30; temporal trace;
finite configuration; symbolic dynamics

\noindent\textbf{2020 Mathematics Subject Classification.} 37B15, 68Q80,
68R15.

\section{Introduction}

Rule 30 is the binary cellular automaton on $\{0,1\}^{\mathbb Z}$ with local rule
\begin{equation}\label{eq:rule30}
 F(x)_i=x_{i-1}\mathbin{\oplus}(x_i\mathbin{\lor}x_{i+1}),
\end{equation}
where $\oplus$ is addition modulo $2$ and $\lor$ is disjunction.  It is left
permutive: fixing $x_i$ and $x_{i+1}$, the value $F(x)_i$ is a bijection of
$x_{i-1}$.

Wolfram's Prize Problem~1 asks whether the center trace of the lone seed is
eventually periodic \cite{wolfram}, restated as Problem~3.10 of \cite{kopra};
Kopra records the broader
finite-configuration version as open \cite{kopra-thesis}.  Jen proved that a
finite Rule 30 orbit has at most one eventually periodic column
\cite[Thm.~2b]{jen1986}\cite[Prop.~3]{jen}, and Kopra's width-two theorem does
not decide whether that exceptional column exists \cite{kopra}.  The obstruction
is structural.  Rule 90 is left permutive as well, and it admits finite
configurations whose center trace is identically zero
(Figure~\ref{fig:panels}(b)), so no argument resting on permutivity alone
separates the two rules.  The classification below turns on the OR of
\eqref{eq:rule30}, which latches, where Rule 90's XOR does not.

We settle the eventual-period-one subcase for every nonzero finite
configuration, a class strictly broader than the lone seed.  For each right half
exactly one left half gives a constant center trace: an alternating tail
selected by the leading one of the right half when the center starts at $0$, and
one universal checkerboard when it starts at $1$
(Theorems~\ref{thm:fiber} and~\ref{thm:onefiber}).  Every nonzero member of
either fiber carries infinitely many ones, so the zero row is the only finite
configuration with a constant trace (Corollaries~\ref{cor:finite}
and~\ref{cor:constant}).  A row of support radius $w$ holds its center constant
for at most $2\lceil w/2\rceil+1$ steps when the center starts at $0$ and
$2\lfloor w/2\rfloor+2$ when it starts at $1$, hence $w+2$ overall, attained by
exactly $2^w$ rows for even $w$ and $2^w-1$ for odd $w$
(Theorems~\ref{thm:horizon} and~\ref{thm:onehorizon},
Corollary~\ref{cor:bothhorizon}).  This does not resolve Prize Problem~1, and
every nonconstant eventual period remains open.

Demanding an all-zero center forces the left half cell by cell via exact
inversion, acting as a prefix-OR of the prescribed right half.  A right half
whose \emph{first} one sits at depth $3$ forces
$L_1,L_2,\ldots=0,0,1,0,1,0,1,\ldots$, ones at every subsequent odd depth.  This
one-sided bijectivity mirrors Rowland's unique-history construction
\cite[Prop.~1]{rowland}.  The latching OR in \eqref{eq:rule30} makes the
classification Rule-30-specific, unlike Rule 90
(Figure~\ref{fig:panels}, Section~\ref{sec:finite}).

\begin{figure}[tbp]
\centering
\begin{tabular}{@{}c@{\hspace{7mm}}c@{\hspace{7mm}}c@{}}
\stpanel{(a) Rule 30}{class $\mathcal C_3$, center zero}{%
  \begin{tikzpicture}[scale=2.365]\input{figures/fig-fiber}\end{tikzpicture}}
&
\stpanel{(b) Rule 90}{$x=\delta_{-1}+\delta_{1}$, center zero}{%
  \begin{tikzpicture}[scale=2.365]  \fill[rulecardfill] (-0.8468,0.0928) rectangle (0.9048,-1.0788);
  \fill[ruleband] (0.0000,0.0928) rectangle (0.0580,-1.0788);
  \fill[ruleink] (0.0580,0.0000) rectangle ++(0.0580,-0.0580);
  \fill[ruleink] (-0.0580,0.0000) rectangle ++(0.0580,-0.0580);
  \fill[ruleink] (0.1160,-0.0580) rectangle ++(0.0580,-0.0580);
  \fill[ruleink] (-0.1160,-0.0580) rectangle ++(0.0580,-0.0580);
  \fill[ruleink] (0.0580,-0.1160) rectangle ++(0.0580,-0.0580);
  \fill[ruleink] (0.1740,-0.1160) rectangle ++(0.0580,-0.0580);
  \fill[ruleink] (-0.1740,-0.1160) rectangle ++(0.0580,-0.0580);
  \fill[ruleink] (-0.0580,-0.1160) rectangle ++(0.0580,-0.0580);
  \fill[ruleink] (-0.2320,-0.1740) rectangle ++(0.0580,-0.0580);
  \fill[ruleink] (0.2320,-0.1740) rectangle ++(0.0580,-0.0580);
  \fill[ruleink] (0.1740,-0.2320) rectangle ++(0.0580,-0.0580);
  \fill[ruleink] (0.2900,-0.2320) rectangle ++(0.0580,-0.0580);
  \fill[ruleink] (-0.2900,-0.2320) rectangle ++(0.0580,-0.0580);
  \fill[ruleink] (-0.1740,-0.2320) rectangle ++(0.0580,-0.0580);
  \fill[ruleink] (-0.3480,-0.2900) rectangle ++(0.0580,-0.0580);
  \fill[ruleink] (0.1160,-0.2900) rectangle ++(0.0580,-0.0580);
  \fill[ruleink] (-0.1160,-0.2900) rectangle ++(0.0580,-0.0580);
  \fill[ruleink] (0.3480,-0.2900) rectangle ++(0.0580,-0.0580);
  \fill[ruleink] (0.0580,-0.3480) rectangle ++(0.0580,-0.0580);
  \fill[ruleink] (0.1740,-0.3480) rectangle ++(0.0580,-0.0580);
  \fill[ruleink] (0.2900,-0.3480) rectangle ++(0.0580,-0.0580);
  \fill[ruleink] (0.4060,-0.3480) rectangle ++(0.0580,-0.0580);
  \fill[ruleink] (-0.4060,-0.3480) rectangle ++(0.0580,-0.0580);
  \fill[ruleink] (-0.2900,-0.3480) rectangle ++(0.0580,-0.0580);
  \fill[ruleink] (-0.1740,-0.3480) rectangle ++(0.0580,-0.0580);
  \fill[ruleink] (-0.0580,-0.3480) rectangle ++(0.0580,-0.0580);
  \fill[ruleink] (-0.4640,-0.4060) rectangle ++(0.0580,-0.0580);
  \fill[ruleink] (0.4640,-0.4060) rectangle ++(0.0580,-0.0580);
  \fill[ruleink] (-0.4060,-0.4640) rectangle ++(0.0580,-0.0580);
  \fill[ruleink] (0.4060,-0.4640) rectangle ++(0.0580,-0.0580);
  \fill[ruleink] (0.5220,-0.4640) rectangle ++(0.0580,-0.0580);
  \fill[ruleink] (-0.5220,-0.4640) rectangle ++(0.0580,-0.0580);
  \fill[ruleink] (-0.3480,-0.5220) rectangle ++(0.0580,-0.0580);
  \fill[ruleink] (0.5800,-0.5220) rectangle ++(0.0580,-0.0580);
  \fill[ruleink] (-0.5800,-0.5220) rectangle ++(0.0580,-0.0580);
  \fill[ruleink] (0.3480,-0.5220) rectangle ++(0.0580,-0.0580);
  \fill[ruleink] (0.2900,-0.5800) rectangle ++(0.0580,-0.0580);
  \fill[ruleink] (0.4060,-0.5800) rectangle ++(0.0580,-0.0580);
  \fill[ruleink] (0.5220,-0.5800) rectangle ++(0.0580,-0.0580);
  \fill[ruleink] (0.6380,-0.5800) rectangle ++(0.0580,-0.0580);
  \fill[ruleink] (-0.6380,-0.5800) rectangle ++(0.0580,-0.0580);
  \fill[ruleink] (-0.5220,-0.5800) rectangle ++(0.0580,-0.0580);
  \fill[ruleink] (-0.4060,-0.5800) rectangle ++(0.0580,-0.0580);
  \fill[ruleink] (-0.2900,-0.5800) rectangle ++(0.0580,-0.0580);
  \fill[ruleink] (0.2320,-0.6380) rectangle ++(0.0580,-0.0580);
  \fill[ruleink] (0.6960,-0.6380) rectangle ++(0.0580,-0.0580);
  \fill[ruleink] (-0.6960,-0.6380) rectangle ++(0.0580,-0.0580);
  \fill[ruleink] (-0.2320,-0.6380) rectangle ++(0.0580,-0.0580);
  \fill[ruleink] (0.1740,-0.6960) rectangle ++(0.0580,-0.0580);
  \fill[ruleink] (0.2900,-0.6960) rectangle ++(0.0580,-0.0580);
  \fill[ruleink] (0.6380,-0.6960) rectangle ++(0.0580,-0.0580);
  \fill[ruleink] (0.7540,-0.6960) rectangle ++(0.0580,-0.0580);
  \fill[ruleink] (-0.7540,-0.6960) rectangle ++(0.0580,-0.0580);
  \fill[ruleink] (-0.6380,-0.6960) rectangle ++(0.0580,-0.0580);
  \fill[ruleink] (-0.2900,-0.6960) rectangle ++(0.0580,-0.0580);
  \fill[ruleink] (-0.1740,-0.6960) rectangle ++(0.0580,-0.0580);
  \fill[ruleink] (0.1160,-0.7540) rectangle ++(0.0580,-0.0580);
  \fill[ruleink] (0.3480,-0.7540) rectangle ++(0.0580,-0.0580);
  \fill[ruleink] (0.5800,-0.7540) rectangle ++(0.0580,-0.0580);
  \fill[ruleink] (-0.5800,-0.7540) rectangle ++(0.0580,-0.0580);
  \fill[ruleink] (-0.3480,-0.7540) rectangle ++(0.0580,-0.0580);
  \fill[ruleink] (-0.1160,-0.7540) rectangle ++(0.0580,-0.0580);
  \fill[ruleink] (0.0580,-0.8120) rectangle ++(0.0580,-0.0580);
  \fill[ruleink] (0.1740,-0.8120) rectangle ++(0.0580,-0.0580);
  \fill[ruleink] (0.2900,-0.8120) rectangle ++(0.0580,-0.0580);
  \fill[ruleink] (0.4060,-0.8120) rectangle ++(0.0580,-0.0580);
  \fill[ruleink] (0.5220,-0.8120) rectangle ++(0.0580,-0.0580);
  \fill[ruleink] (0.6380,-0.8120) rectangle ++(0.0580,-0.0580);
  \fill[ruleink] (0.7540,-0.8120) rectangle ++(0.0580,-0.0580);
  \fill[ruleink] (-0.7540,-0.8120) rectangle ++(0.0580,-0.0580);
  \fill[ruleink] (-0.6380,-0.8120) rectangle ++(0.0580,-0.0580);
  \fill[ruleink] (-0.5220,-0.8120) rectangle ++(0.0580,-0.0580);
  \fill[ruleink] (-0.4060,-0.8120) rectangle ++(0.0580,-0.0580);
  \fill[ruleink] (-0.2900,-0.8120) rectangle ++(0.0580,-0.0580);
  \fill[ruleink] (-0.1740,-0.8120) rectangle ++(0.0580,-0.0580);
  \fill[ruleink] (-0.0580,-0.8120) rectangle ++(0.0580,-0.0580);
  \draw[rulegold!65, line width=0.3pt] (0.0000,0.0928) -- (0.0000,-1.0788);
  \draw[rulegold!65, line width=0.3pt] (0.0580,0.0928) -- (0.0580,-1.0788);
  \draw[rulecardline] (-0.8468,0.0928) rectangle (0.9048,-1.0788);
  \fill[rulegold] (-0.0261,-1.0788) -- (0.0841,-1.0788) -- (0.0290,-1.0353) -- cycle;\end{tikzpicture}}
&
\stpanel{(c) Rule 30}{same row, center escapes}{%
  \begin{tikzpicture}[scale=2.365]\input{figures/fig-rule30}\end{tikzpicture}}
\end{tabular}
\\[3mm]
\caption{A zero center trace forces an infinite left tail under Rule 30, but not
under Rule 90.  Time runs downward; the gold band and caret mark column $0$.
(a) The left half forced by a right half of class $\mathcal C_3$ alternates,
hence carries infinitely many ones, which is what Corollary~\ref{cor:finite}
exploits.  (b) and (c) run the single finite row $x=\delta_{-1}+\delta_1$ under
Rule 90 and under Rule 30: its center stays zero under Rule 90 and escapes under
Rule 30.  The only difference between the two rules is the OR latch of
\eqref{eq:rule30}.}
\label{fig:panels}
\end{figure}

Jen gives necessary and sufficient conditions for a finite initial condition to
generate an eventually-one temporal sequence \cite[Thm.~7a]{jen1986}.  Rule 30
meets the hypothesis $a_0=0$, $a_1=a_4=1$ but none of the four alternatives,
which require respectively $a_6=a_7=1$, $a_6=1$, $a_7=1$, or $a_5=1$, while
Rule 30 has $a_5=a_6=a_7=0$; necessity therefore already excludes an
eventually-one column for every finite Rule 30 configuration, and we claim no
novelty for that exclusion.  New here are the two exact fixed-locus fibers, the
explicit zero exclusion, and the sharp horizons with extremizer counts.  Jen
remarks that symmetric results hold for constant-zero sequences but states
neither a zero-fiber classification nor any of these quantitative conclusions.
Finally, deciding whether a subconfiguration of length $K$ has a $K$-step
preimage is NP-complete already for a nearest-neighbor rule \cite{green}, and
one-step inversion has been solved for the \emph{additive} class over
$\mathbb Z_p$ by an operator analogous to backward integration \cite{voorhees};
Theorem~\ref{thm:fiber} inverts a constraint imposed over all future time, for a
rule that is not additive.

\section{The constant-trace fibers}

The trace and the two halves of a row are written as follows.

\begin{equation}\label{eq:notation}
 c_t:=\Tr(x)_t=F^t(x)_0,\qquad R_j:=x_j,\qquad L_j:=x_{-j}\quad(j\geq0)
\end{equation}

Throughout, $\F$ denotes the finite-support configurations.

The trace sees only column $0$, but under a left-permutive rule a discrepancy
left of the origin propagates rightward along a diagonal and must eventually
reach it.  The classification rests on this one fact, which uses only locality
and left permutivity, not the specific form of \eqref{eq:rule30}.

\begin{lemma}[Triangular uniqueness]\label{lem:unique}
If two configurations agree at every coordinate $i\geq0$ and have the same
central trace, then they are equal.
\end{lemma}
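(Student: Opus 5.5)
We argue by contradiction, taking $x\neq y$ that agree on all $i\ge 0$ and have the same central trace. Let $d\ge1$ be the least depth with $x_{-d}\neq y_{-d}$; this exists because the two rows differ somewhere left of the origin. The plan is to track a single diagonal of disagreement that starts at $(0,-d)$ and moves one cell to the right per time step. It reaches column $0$ at time $d$, so $c_d(x)\neq c_d(y)$, which contradicts equal traces.

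The key step is the following propagation claim. For each $t=0,1,\dots,d$:
\begin{itemize}
\item[(i)] $F^t(x)_i=F^t(y)_i$ for every $i\ge -d+t+1$;
\item[(ii)] $F^t(x)_{-d+t}\neq F^t(y)_{-d+t}$.
\end{itemize}
We do not need any control of what happens left of the diagonal. At $t=0$ both parts hold by the choice of $d$, since $x$ and $y$ agree at all $i\ge -d+1$ (they agree on $i\ge0$, and $d$ is minimal).

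For the inductive step, suppose the claim holds at time $t<d$. Every cell $i\ge -d+t+2$ at time $t+1$ depends only on the cells $i-1,i,i+1$ at time $t$. All three indices are $\ge -d+t+1$, so by (i) those cells agree and (i) holds at time $t+1$.

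For cell $i=-d+t+1$ at time $t+1$, the inputs are the cells $-d+t$, $-d+t+1$ and $-d+t+2$ at time $t$. The last two agree by (i). By left permutivity, $F(z)_i$ is a bijection of $z_{i-1}$ once $z_i$ and $z_{i+1}$ are fixed; here $x_i\lor x_{i+1}$ is fixed and $\oplus$ with it is a bijection. Since the cell $-d+t$ differs by (ii), the output differs, which gives (ii) at time $t+1$.

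Taking $t=d$, part (ii) gives $F^d(x)_0\neq F^d(y)_0$, which is the contradiction. The only point needing care is that the disagreement region to the left of the diagonal could in principle grow in both directions. The induction sidesteps this because it only asserts agreement to the right of the diagonal and one-step disagreement on it, and that information is exactly what nearest-neighbour locality plus left permutivity supplies. Nothing specific to the OR in \eqref{eq:rule30} is used, which matches the remark preceding the lemma.
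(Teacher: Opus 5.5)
Your proof is correct and follows the paper's argument: take the rightmost discrepancy $-d$, use locality to keep agreement strictly to the right of the diagonal, and use left permutivity to carry the discrepancy along the cells $(-d+t,\,t)$ until it reaches column $0$ at time $d$. Your two-part induction simply spells out the paper's two-line proof in detail. It also gives the finite-prefix form recorded there: a discrepancy at depth $d\le r$ appears in the trace by time $r$.
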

\begin{proof}
If $-n$ is their rightmost differing coordinate, locality preserves agreement
to its right through time $n$.  Left permutivity preserves the discrepancy
along $(-n,0),(-n+1,1),\ldots,(0,n)$, contradicting equal traces.  In its
finite form, if the rows agree on $i\geq0$ and their traces agree for
$0\leq t\leq r$, the same induction forces agreement at
$-r,-r+1,\ldots,-1$.
\end{proof}

Uniqueness alone does not say what the forced left half looks like.  The next
result computes it: the rows with zero trace form one invariant class
$\mathcal C_m$ for each position $m$ of the first right-hand one, each class is
carried to its predecessor by $F$, and the left half is a prefix-OR of the
right.  Figure~\ref{fig:panels}(a) is a member of $\mathcal C_3$.

\begin{theorem}[Zero-trace fiber]\label{thm:fiber}
For a prescribed right half with $R_0=0$, the zero-trace-compatible left half
is unique.  Suppose the positive half is nonzero, and let
$m=\min\{j\geq1:R_j=1\}$.  The forced left half is then the following, with
$R_j$ for $j>m$ arbitrary.

\begin{equation}\label{eq:pattern}
 L_j=0\ (j<m),\qquad L_m=1,\qquad L_j=j\bmod2\ (j>m)
\end{equation}

If instead the positive half is zero, the unique row is zero.  The same
classification in prefix-OR form is the following.

\begin{align}
 L_{2k+1}&=\bigvee_{j=1}^{2k+1}R_j &&(k\geq0) \label{eq:odd}\\[-2pt]
 L_{2k}&=R_{2k}\land\neg\!\bigvee_{j=1}^{2k-1}R_j &&(k\geq1) \label{eq:even}
\end{align}
\end{theorem}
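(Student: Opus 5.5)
The plan is to get uniqueness for free from Lemma~\ref{lem:unique}, and existence by showing that the rows described in \eqref{eq:pattern} form a family that $F$ maps into itself while keeping the center at $0$.

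\emph{Uniqueness.} Two rows that share the right half $(R_j)_{j\ge0}$ and both have the zero trace agree on $i\ge0$ and have equal traces. Lemma~\ref{lem:unique} then makes them equal. So it is enough to exhibit one zero-trace row for each right half.

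\emph{Zero right half.} If $R_j=0$ for all $j\ge0$, the zero row works, and uniqueness finishes this case.

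\emph{Nonzero right half.} For $m\ge1$, let $\mathcal C_m$ be the set of rows with $R_0=0$, $R_j=0$ for $0<j<m$, $R_m=1$, arbitrary $R_j$ for $j>m$, and left half given by \eqref{eq:pattern}. I will prove two claims.
\begin{enumerate}
\item Every $x\in\mathcal C_m$ has $F(x)_0=0$.
\item $F(\mathcal C_1)\subseteq\mathcal C_1$, and $F(\mathcal C_m)\subseteq\mathcal C_{m-1}$ for $m\ge2$.
\end{enumerate}
Together these give $F^t(x)\in\bigcup_{m\ge1}\mathcal C_m$ for all $t$, so by induction on $t$ the trace of every member of $\mathcal C_m$ is identically $0$.

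The first claim is the one-line computation $F(x)_0=L_1\oplus(R_0\lor R_1)=L_1\oplus R_1$. This equals $1\oplus1=0$ when $m=1$ and $0\oplus0=0$ when $m>1$.

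The second claim is a local check of $y=F(x)$ cell by cell, $y_i=x_{i-1}\oplus(x_i\lor x_{i+1})$, split into three zones.
\begin{itemize}
\item \emph{Right zone.} For $1\le j<m-1$ all three inputs vanish, so $y_j=0$. At $j=m-1$ (or $j=1$ when $m=1$) we get $y=0\oplus1=1$. So the first right one moves to $m-1$, or stays at $1$.
\item \emph{Left zeros.} For $1\le j<m-1$ the inputs are zero, so $y_{-j}=0$. At $j=m-1$ we have $x_{-m}=1$ and zeros to its right, giving $y_{-(m-1)}=1$.
\item \emph{Alternating tail.} For $j\ge m$ the three inputs lie in the tail or at the boundary cell $L_m=1$. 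Using the parities, $j$ odd gives $x_{-j-1}=0$ and $x_{-j}=1$, hence $y_{-j}=1$. For $j$ even, $x_{-j-1}=1$ and $x_{-j+1}=1$ give $y_{-j}=0$, so $y_{-j}=j\bmod2$.
\end{itemize}
For $m=1$ I checked this directly, including the cell $j=1$: $y_{-1}=0\oplus(1\lor0)=1$. So $\mathcal C_1$ is invariant.

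The main obstacle is the boundary cells $j=m$ and $j=m+1$. There $L_m=1$ need not match the parity $m\bmod2$, so I must confirm that the new tail $y_{-j}=j\bmod2$ for $j>m-1$ starts correctly. I will handle $m$ even and $m$ odd separately, writing out the five cells $x_{-m-2},\dots,x_{-m+2}$ in each case.

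\emph{Prefix-OR form.} Finally I check that \eqref{eq:odd}--\eqref{eq:even} reproduce \eqref{eq:pattern}.
\begin{itemize}
\item For odd $j$, $\bigvee_{i\le j}R_i$ is $0$ for $j<m$ and $1$ for $j\ge m$. This matches $L_j=0$ for $j<m$, $L_m=1$ when $m$ is odd, and $L_j=1$ for odd $j>m$.
\item For even $j$, $R_j\land\neg\bigvee_{i<j}R_i$ is $1$ exactly when $j=m$. This matches $L_m=1$ for even $m$ and $L_j=0$ for all other even $j$.
\end{itemize}
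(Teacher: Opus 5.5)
Your proposal is correct and follows the paper's proof. Existence comes from the same invariant classes $\mathcal C_m$ with $F(\mathcal C_m)\subseteq\mathcal C_{m-1}$ and $F(\mathcal C_1)\subseteq\mathcal C_1$, and uniqueness from Lemma~\ref{lem:unique}; you also check the prefix-OR form, which the paper leaves implicit. The boundary check you deferred takes one line, since $L_m=1$ sits inside the OR of both $L'_m=L_{m+1}\oplus(L_m\lor L_{m-1})$ and $L'_{m+1}=L_{m+2}\oplus(L_{m+1}\lor L_m)$. Note that your generic even-$j$ claim $x_{-j+1}=1$ is false at $j=m$ (there $x_{-m+1}=L_{m-1}=0$); it is $x_{-m}=1$ that makes the OR equal to one.
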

\begin{proof}
For $m\geq1$, let $\mathcal C_m$ consist of rows satisfying these conditions.
One Rule 30 step gives the three identities below.

\begin{equation}\label{eq:updates}
\begin{aligned}
 x'_0&=L_1\oplus R_1\\
 R'_j&=R_{j-1}\oplus(R_j\lor R_{j+1})\\
 L'_j&=L_{j+1}\oplus(L_j\lor L_{j-1})
\end{aligned}
\end{equation}

Suppose $m>1$.  These identities then give the following.

\begin{equation}\label{eq:step}
 x'_0=0,\quad R'_j=L'_j=0\ (j<m-1),\quad R'_{m-1}=L'_{m-1}=1,
 \quad L'_m=m\bmod2
\end{equation}

For $j>m$, including $j=m+1$ because $L_m=1$, the OR in the last update is
one, so $L'_j=(1-(j\bmod2))\oplus1=j\bmod2$.  Thus
$F(\mathcal C_m)\subseteq\mathcal C_{m-1}$.  For $m=1$, the alternating left
half is fixed, $x'_0=1\oplus1=0$, and
$R'_1=0\oplus(1\lor R_2)=1$ independently of the right tail; hence
$F(\mathcal C_1)\subseteq\mathcal C_1$.  Every $\mathcal C_m$ therefore
realizes the zero trace.

A nonzero right half selects a realization by its first one, and
Lemma~\ref{lem:unique} makes it unique; a zero right half forces a zero left
half by the same lemma against the zero row.
\end{proof}

The other constant trace has a simpler fiber: one row per right half, and the
left half no longer depends on which right half it is.

\begin{theorem}[All-one fiber]\label{thm:onefiber}
For a prescribed right half with $R_0=1$, the left half compatible with an
identically one trace is unique and is the same for every such right half.

\begin{equation}\label{eq:checker}
 L_j=1\iff j\text{ is positive and even}
\end{equation}
\end{theorem}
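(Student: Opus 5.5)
The plan mirrors the proof of Theorem~\ref{thm:fiber}, with one change: only the left half has to be tracked, because the right half plays no role. Let $\mathcal D$ be the set of rows with $x_0=1$ whose left half is the checkerboard \eqref{eq:checker}, that is, $L_1=0$, $L_2=1$, $L_3=0,\ldots$, with $R_j$ arbitrary for $j\geq1$. I will show $F(\mathcal D)\subseteq\mathcal D$. It then follows that every member of $\mathcal D$ has the identically one trace. Uniqueness of the compatible left half then follows from Lemma~\ref{lem:unique}, since any two rows sharing a right half and the all-one trace must coincide.

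For invariance, apply the identities \eqref{eq:updates}, adjusted so that $x_0$ appears explicitly. First, $x'_0=L_1\oplus(x_0\lor R_1)=0\oplus1=1$, whatever $R_1$ is, because the OR latches on $x_0=1$. Second, $L'_1=L_2\oplus(L_1\lor x_0)=1\oplus1=0$. Third, for $j\geq2$, the cells $L_{j-1}$ and $L_j$ have consecutive positive indices, so one of them carries an even positive index and equals $1$. Hence the OR in $L'_j=L_{j+1}\oplus(L_j\lor L_{j-1})$ equals $1$, and $L'_j=\neg L_{j+1}$. Since $L_{j+1}=1$ exactly when $j+1$ is even, this gives $L'_j=1$ exactly when $j$ is even. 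The left half is therefore reproduced exactly, the center stays $1$, and the new right half is whatever it is, which is allowed. So $F(\mathcal D)\subseteq\mathcal D$.

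The only delicate step is the index bookkeeping at $j=1$ and $j=2$, where $x_0$ enters the OR in place of a left-half cell. That step is settled by the two explicit computations above. Existence and independence from the right half both come from the latching OR, which also shows why the left half cannot depend on $R$. Uniqueness then uses Lemma~\ref{lem:unique} verbatim.
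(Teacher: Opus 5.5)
Your proof is correct and follows essentially the same route as the paper. The paper also sets $L_0:=x_0=1$ and uses $L'_j=L_{j+1}\oplus(L_j\lor L_{j-1})$ to check that the checkerboard left half is fixed by $F$ whatever the right half is, gets $F(x)_0=L_1\oplus(x_0\lor R_1)=1$ from the OR latch, and concludes uniqueness from Lemma~\ref{lem:unique}; your separate treatment of $j=1$ and the ``one of two consecutive positive indices is even'' remark only repackage the paper's split by the parity of $j$.
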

\begin{proof}
Write $L_0:=x_0=R_0=1$, so that the third identity of \eqref{eq:updates},
$L'_j=L_{j+1}\oplus(L_j\lor L_{j-1})$, applies at every $j\geq1$.  Under
\eqref{eq:checker} it gives $0\oplus(1\lor0)=1$ for even $j$ and
$1\oplus(0\lor1)=0$ for odd $j$, so the left half is fixed by $F$ whatever the
right half is.  At the origin, $F(x)_0=L_1\oplus(L_0\lor R_1)=0\oplus1=1$
independently of $R_1$, so the trace is identically one and $L_0=1$ is restored
at every step.  Lemma~\ref{lem:unique} makes the left half unique.
\end{proof}

\section{Finite configurations}\label{sec:finite}

Every nonzero row in Theorem~\ref{thm:fiber} carries ones arbitrarily far to
the left, so none of them is finitely supported.  That is the whole of the next
statement, once one checks that a finite orbit cannot become the zero row.

\begin{corollary}\label{cor:finite}
$\Tr^{-1}(0^{\mathbb N})\cap\F=\{0\}$.  Moreover, no column of the orbit of a
nonzero $x\in\F$ is eventually zero.
\end{corollary}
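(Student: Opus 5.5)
The plan has two parts. First we prove $\Tr^{-1}(0^{\mathbb N})\cap\F=\{0\}$ directly from Theorem~\ref{thm:fiber}. Then we reduce the statement about columns to that one by shifting and by passing to a later time.

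\textbf{First claim.} Let $x\in\F$ have identically zero trace. Then $R_0=x_0=c_0=0$, so Theorem~\ref{thm:fiber} applies to the right half of $x$. There are two cases.
\begin{itemize}
\item If the positive half is nonzero, set $m=\min\{j\ge1:R_j=1\}$. By \eqref{eq:pattern}, $L_j=j\bmod2$ for every $j>m$. So $x_{-j}=1$ for all odd $j>m$, which contradicts finite support.
\item If the positive half is zero, Theorem~\ref{thm:fiber} says the unique compatible row is the zero row, so $x=0$.
\end{itemize}
In both cases only $x=0$ survives. Conversely, $0\in\F$ has zero trace because $F(0)=0$.

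\textbf{Second claim.} Suppose a nonzero $x\in\F$ has a column $i$ with $F^t(x)_i=0$ for all $t\ge T$. Let $\sigma$ be the shift. Since $F$ commutes with $\sigma$, the configuration $y=\sigma^{i}F^T(x)$ lies in $\F$ and satisfies $\Tr(y)=0^{\mathbb N}$. The first claim then gives $y=0$, hence $F^T(x)=0$. It remains to show that a nonzero finite configuration never maps to $0$ under $F$.

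\textbf{Key step: $F$ sends nonzero finite rows to nonzero rows.} Let $x\in\F$ be nonzero and let $a$ be the leftmost index with $x_a=1$. Then
\[
F(x)_{a+1}=x_a\oplus(x_{a+1}\lor x_{a+2})=1\oplus(\cdot),
\]
which is not decisive. So we look at the right end instead. Let $b$ be the rightmost index with $x_b=1$. Then
\[
F(x)_{b+1}=x_b\oplus(x_{b+1}\lor x_{b+2})=1\oplus0=1,
\]
so $F(x)\neq0$. By induction, $F^T(x)\neq0$, which contradicts $F^T(x)=0$. This is exactly the "finite orbit cannot become the zero row" check mentioned before the statement.

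I expect no real obstacle. The only care needed is the bookkeeping for the shift in the column reduction, and checking that $F^T(x)$ stays in $\F$, which holds because $F$ has radius $1$.
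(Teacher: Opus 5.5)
Your proposal is correct and follows the paper's proof essentially step for step. The alternating tail forced by Theorem~\ref{thm:fiber} rules out nonzero finite zero-trace rows; translating and passing to time $T$ reduces the column statement to that case; and the pattern $100\mapsto1$ at $b+1$ shows a nonzero finite row never maps to $0$ (the paper also notes $001\mapsto1$ at $a-1$, but one end suffices).
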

\begin{proof}
Every nonzero row in Theorem~\ref{thm:fiber} has ones at all sufficiently
large odd left depths.  If a column of a finite orbit were zero from time
$T$ onward, translation would make $F^T(x)$ a finite zero-trace row.  It is
nonzero: if $[a,b]$ is the support interval of a nonzero finite row, its next
row has ones at $a-1$ and $b+1$ because Rule 30 maps $001$ and $100$ to one.
\end{proof}

Corollary~\ref{cor:finite} is not a consequence of left permutivity alone:
Rule 90 is also left permutive, yet the finite row with ones at $\pm1$ keeps a
zero center forever (Figure~\ref{fig:panels}(b,c)).  The Rule-30-specific step
is the OR latch $R'_1=1\lor R_2=1$ in the proof of Theorem~\ref{thm:fiber},
which has no analogue in an additive rule.

The same latch upgrades the corollary from one constant to both, with no
further machinery.  Theorem~\ref{thm:onefiber} proves the second half again and
independently, since \eqref{eq:checker} has ones at every positive even depth;
the argument below is shorter and is the one to quote.

\begin{corollary}\label{cor:constant}
No column of the orbit of a nonzero $x\in\F$ is eventually constant.
\end{corollary}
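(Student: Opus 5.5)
Corollary~\ref{cor:finite} already excludes an eventually zero column, so the plan is to exclude an eventually one column.  The text says the argument should be a short latch argument rather than an appeal to Theorem~\ref{thm:onefiber}, so I will argue directly from the local rule.  Suppose some column $i$ of the orbit of a nonzero $x\in\F$ satisfies $F^t(x)_i=1$ for all $t\ge T$.  After translating so that $i=0$ and replacing $x$ by $y:=F^T(x)$, we get a finite row $y$ whose trace is identically one.  This row is nonzero automatically, since $y_0=1$.

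The key step looks at the left edge of the support of the orbit.  Let $[a_t,b_t]$ be the support interval of $F^t(y)$.  By the observation in the proof of Corollary~\ref{cor:finite}, $001\mapsto1$ and $100\mapsto1$, while a cell whose whole neighborhood is zero stays zero.  Hence $a_{t+1}=a_t-1$ and $b_{t+1}=b_t+1$ exactly.  The left edge therefore moves left at speed one.

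Next I look at the cells just right of the left edge to find a latch that propagates toward column $0$.  An alternative, probably cleaner, route is a parity or counting argument on the left boundary: the left edge of a Rule 30 pattern from a finite row is eventually periodic and creates a fixed left-moving structure.  That alone, however, does not see column $0$.

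So the real plan is the following.  Apply Lemma~\ref{lem:unique} together with the exact fiber of Theorem~\ref{thm:onefiber}.  A row with trace identically one and $R_0=1$ must have $L_j=1$ at every positive even $j$, and a finite $y$ has no such tail, which gives the contradiction.  The paper calls its own argument shorter, presumably by reducing to the zero case.  One attempt at that reduction is to note that complementation conjugates Rule 30 to a rule that is not Rule 30, so it does not work directly.  I will therefore rely on Theorem~\ref{thm:onefiber}.

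The main obstacle is only formal: checking that the finite row $y=F^T(x)$ fits the hypotheses of Theorem~\ref{thm:onefiber}, namely that its right half has $R_0=1$ and is arbitrary otherwise.  This holds because $y_0=c_T=1$.  The checkerboard forced on the left half then has infinitely many ones, which contradicts $y\in\F$.  Combining this with Corollary~\ref{cor:finite} covers both constants.
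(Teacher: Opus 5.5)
Your argument is correct, but it does not follow the paper's proof. It is the alternative the paper mentions and sets aside (``Theorem~\ref{thm:onefiber} proves the second half again and independently'').

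Your final step is sound. The translate $y$ of $F^T(x)$ is finite because Rule 30 maps $000$ to $0$, it has $y_0=1$, and its trace is identically one. Theorem~\ref{thm:onefiber}, via Lemma~\ref{lem:unique}, therefore forces the checkerboard \eqref{eq:checker}, with ones at every positive even left depth, which is impossible in $\F$.

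The paper instead reduces the one case to the zero case with a one-line latch at the origin. Write $l_t=F^t(x)_{-1}$ and $r_t=F^t(x)_1$. Then $c_{t+1}=l_t\oplus(c_t\lor r_t)$, and the hypothesis $c_t=1$ collapses the OR, so $l_t=1\oplus c_{t+1}=0$ for all $t\ge T$. Column $-1$ is then eventually zero, which Corollary~\ref{cor:finite} forbids. This is the ``reduction to the zero case'' you guessed at. It works by passing to the neighbouring column, not by complementing, which, as you note, does not return Rule 30.

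What each route buys:
\begin{itemize}
\item The paper's route needs only the zero fiber and no second classification.
\item Yours handles the two constants independently of each other, not depending on Corollary~\ref{cor:finite} for the one case. It also identifies exactly what a finite all-one row would lack, namely the checkerboard tail.
\end{itemize}

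The exploratory material on support edges and an ``eventually periodic left edge'' plays no role in your proof, and the latter claim is unproved. Drop it from a written proof.
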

\begin{proof}
The eventually-zero case is Corollary~\ref{cor:finite}.  Suppose instead some
column is eventually one; translating, take it to be column $0$, so $c_t=1$ for
all $t\geq T$.  Applying \eqref{eq:rule30} at the origin gives
$l_t=c_{t+1}\oplus(c_t\lor r_t)$, and $c_t=1$ collapses the disjunction, so
$l_t=1\oplus c_{t+1}$.  For $t\geq T$ we also have $c_{t+1}=1$, hence $l_t=0$.
Column $-1$ is then zero from time $T$ onward, which
Corollary~\ref{cor:finite} forbids.
\end{proof}

\begin{figure}[t]
\centering
\begin{tabular}{@{}c@{\hspace{5mm}}c@{\hspace{5mm}}c@{}}
\stlabel{$w=1$} & \stlabel{$w=2$} & \stlabel{$w=3$}
\\[3pt]
\begin{tikzpicture}[scale=2.8224]  \fill[rulecardfill] (-0.3828,0.0928) rectangle (0.4408,-0.3248);
  \fill[ruleband] (0.0000,0.0928) rectangle (0.0580,-0.3248);
  \fill[ruleink] (0.0580,0.0000) rectangle ++(0.0580,-0.0580);
  \fill[ruleink] (-0.0580,0.0000) rectangle ++(0.0580,-0.0580);
  \fill[ruleink] (0.0580,-0.0580) rectangle ++(0.0580,-0.0580);
  \fill[ruleink] (0.1160,-0.0580) rectangle ++(0.0580,-0.0580);
  \fill[ruleink] (-0.1160,-0.0580) rectangle ++(0.0580,-0.0580);
  \fill[ruleink] (-0.0580,-0.0580) rectangle ++(0.0580,-0.0580);
  \fill[ruleink] (0.0580,-0.1160) rectangle ++(0.0580,-0.0580);
  \fill[ruleink] (0.1740,-0.1160) rectangle ++(0.0580,-0.0580);
  \fill[ruleink] (-0.1740,-0.1160) rectangle ++(0.0580,-0.0580);
  \fill[ruleink] (-0.1160,-0.1160) rectangle ++(0.0580,-0.0580);
  \fill[ruleink] (0.0000,-0.1740) rectangle ++(0.0580,-0.0580);
  \fill[ruleink] (0.0580,-0.1740) rectangle ++(0.0580,-0.0580);
  \fill[ruleink] (0.1740,-0.1740) rectangle ++(0.0580,-0.0580);
  \fill[ruleink] (0.2320,-0.1740) rectangle ++(0.0580,-0.0580);
  \fill[ruleink] (-0.2320,-0.1740) rectangle ++(0.0580,-0.0580);
  \fill[ruleink] (-0.1740,-0.1740) rectangle ++(0.0580,-0.0580);
  \fill[ruleink] (-0.0580,-0.1740) rectangle ++(0.0580,-0.0580);
  \draw[rulegold!65, line width=0.3pt] (0.0000,0.0928) -- (0.0000,-0.3248);
  \draw[rulegold!65, line width=0.3pt] (0.0580,0.0928) -- (0.0580,-0.3248);
  \draw[rulecardline] (-0.3828,0.0928) rectangle (0.4408,-0.3248);
  \fill[rulegold] (-0.0261,-0.3248) -- (0.0841,-0.3248) -- (0.0290,-0.2813) -- cycle;
  \draw[white, line width=1.5pt] (0.0290,-0.2030) circle (0.1102);
  \draw[rulegold, line width=0.7pt] (0.0290,-0.2030) circle (0.1102);\end{tikzpicture} &
\begin{tikzpicture}[scale=2.8224]  \fill[rulecardfill] (-0.4408,0.0928) rectangle (0.4988,-0.3248);
  \fill[ruleband] (0.0000,0.0928) rectangle (0.0580,-0.3248);
  \fill[ruleink] (0.1160,0.0000) rectangle ++(0.0580,-0.0580);
  \fill[ruleink] (-0.1160,0.0000) rectangle ++(0.0580,-0.0580);
  \fill[ruleink] (0.0580,-0.0580) rectangle ++(0.0580,-0.0580);
  \fill[ruleink] (0.1160,-0.0580) rectangle ++(0.0580,-0.0580);
  \fill[ruleink] (0.1740,-0.0580) rectangle ++(0.0580,-0.0580);
  \fill[ruleink] (-0.1160,-0.0580) rectangle ++(0.0580,-0.0580);
  \fill[ruleink] (-0.1740,-0.0580) rectangle ++(0.0580,-0.0580);
  \fill[ruleink] (-0.0580,-0.0580) rectangle ++(0.0580,-0.0580);
  \fill[ruleink] (0.0580,-0.1160) rectangle ++(0.0580,-0.0580);
  \fill[ruleink] (0.2320,-0.1160) rectangle ++(0.0580,-0.0580);
  \fill[ruleink] (-0.2320,-0.1160) rectangle ++(0.0580,-0.0580);
  \fill[ruleink] (-0.1740,-0.1160) rectangle ++(0.0580,-0.0580);
  \fill[ruleink] (0.0000,-0.1740) rectangle ++(0.0580,-0.0580);
  \fill[ruleink] (0.0580,-0.1740) rectangle ++(0.0580,-0.0580);
  \fill[ruleink] (0.1160,-0.1740) rectangle ++(0.0580,-0.0580);
  \fill[ruleink] (0.1740,-0.1740) rectangle ++(0.0580,-0.0580);
  \fill[ruleink] (0.2320,-0.1740) rectangle ++(0.0580,-0.0580);
  \fill[ruleink] (0.2900,-0.1740) rectangle ++(0.0580,-0.0580);
  \fill[ruleink] (-0.2900,-0.1740) rectangle ++(0.0580,-0.0580);
  \fill[ruleink] (-0.2320,-0.1740) rectangle ++(0.0580,-0.0580);
  \fill[ruleink] (-0.1160,-0.1740) rectangle ++(0.0580,-0.0580);
  \draw[rulegold!65, line width=0.3pt] (0.0000,0.0928) -- (0.0000,-0.3248);
  \draw[rulegold!65, line width=0.3pt] (0.0580,0.0928) -- (0.0580,-0.3248);
  \draw[rulecardline] (-0.4408,0.0928) rectangle (0.4988,-0.3248);
  \fill[rulegold] (-0.0261,-0.3248) -- (0.0841,-0.3248) -- (0.0290,-0.2813) -- cycle;
  \draw[white, line width=1.5pt] (0.0290,-0.2030) circle (0.1102);
  \draw[rulegold, line width=0.7pt] (0.0290,-0.2030) circle (0.1102);\end{tikzpicture} &
\begin{tikzpicture}[scale=2.8224]  \fill[rulecardfill] (-0.6148,0.0928) rectangle (0.6728,-0.4408);
  \fill[ruleband] (0.0000,0.0928) rectangle (0.0580,-0.4408);
  \fill[ruleink] (0.1740,0.0000) rectangle ++(0.0580,-0.0580);
  \fill[ruleink] (-0.1740,0.0000) rectangle ++(0.0580,-0.0580);
  \fill[ruleink] (0.1160,-0.0580) rectangle ++(0.0580,-0.0580);
  \fill[ruleink] (0.1740,-0.0580) rectangle ++(0.0580,-0.0580);
  \fill[ruleink] (0.2320,-0.0580) rectangle ++(0.0580,-0.0580);
  \fill[ruleink] (-0.2320,-0.0580) rectangle ++(0.0580,-0.0580);
  \fill[ruleink] (-0.1740,-0.0580) rectangle ++(0.0580,-0.0580);
  \fill[ruleink] (-0.1160,-0.0580) rectangle ++(0.0580,-0.0580);
  \fill[ruleink] (0.0580,-0.1160) rectangle ++(0.0580,-0.0580);
  \fill[ruleink] (0.1160,-0.1160) rectangle ++(0.0580,-0.0580);
  \fill[ruleink] (0.2900,-0.1160) rectangle ++(0.0580,-0.0580);
  \fill[ruleink] (-0.2900,-0.1160) rectangle ++(0.0580,-0.0580);
  \fill[ruleink] (-0.2320,-0.1160) rectangle ++(0.0580,-0.0580);
  \fill[ruleink] (-0.0580,-0.1160) rectangle ++(0.0580,-0.0580);
  \fill[ruleink] (0.0580,-0.1740) rectangle ++(0.0580,-0.0580);
  \fill[ruleink] (0.1740,-0.1740) rectangle ++(0.0580,-0.0580);
  \fill[ruleink] (0.2320,-0.1740) rectangle ++(0.0580,-0.0580);
  \fill[ruleink] (0.2900,-0.1740) rectangle ++(0.0580,-0.0580);
  \fill[ruleink] (0.3480,-0.1740) rectangle ++(0.0580,-0.0580);
  \fill[ruleink] (-0.0580,-0.1740) rectangle ++(0.0580,-0.0580);
  \fill[ruleink] (-0.3480,-0.1740) rectangle ++(0.0580,-0.0580);
  \fill[ruleink] (-0.2900,-0.1740) rectangle ++(0.0580,-0.0580);
  \fill[ruleink] (-0.1740,-0.1740) rectangle ++(0.0580,-0.0580);
  \fill[ruleink] (-0.1160,-0.1740) rectangle ++(0.0580,-0.0580);
  \fill[ruleink] (0.0580,-0.2320) rectangle ++(0.0580,-0.0580);
  \fill[ruleink] (0.1740,-0.2320) rectangle ++(0.0580,-0.0580);
  \fill[ruleink] (0.4060,-0.2320) rectangle ++(0.0580,-0.0580);
  \fill[ruleink] (-0.4060,-0.2320) rectangle ++(0.0580,-0.0580);
  \fill[ruleink] (-0.3480,-0.2320) rectangle ++(0.0580,-0.0580);
  \fill[ruleink] (-0.1740,-0.2320) rectangle ++(0.0580,-0.0580);
  \fill[ruleink] (0.0000,-0.2900) rectangle ++(0.0580,-0.0580);
  \fill[ruleink] (0.0580,-0.2900) rectangle ++(0.0580,-0.0580);
  \fill[ruleink] (0.1740,-0.2900) rectangle ++(0.0580,-0.0580);
  \fill[ruleink] (0.2320,-0.2900) rectangle ++(0.0580,-0.0580);
  \fill[ruleink] (0.3480,-0.2900) rectangle ++(0.0580,-0.0580);
  \fill[ruleink] (0.4060,-0.2900) rectangle ++(0.0580,-0.0580);
  \fill[ruleink] (0.4640,-0.2900) rectangle ++(0.0580,-0.0580);
  \fill[ruleink] (-0.4640,-0.2900) rectangle ++(0.0580,-0.0580);
  \fill[ruleink] (-0.4060,-0.2900) rectangle ++(0.0580,-0.0580);
  \fill[ruleink] (-0.2900,-0.2900) rectangle ++(0.0580,-0.0580);
  \fill[ruleink] (-0.2320,-0.2900) rectangle ++(0.0580,-0.0580);
  \fill[ruleink] (-0.1740,-0.2900) rectangle ++(0.0580,-0.0580);
  \fill[ruleink] (-0.1160,-0.2900) rectangle ++(0.0580,-0.0580);
  \draw[rulegold!65, line width=0.3pt] (0.0000,0.0928) -- (0.0000,-0.4408);
  \draw[rulegold!65, line width=0.3pt] (0.0580,0.0928) -- (0.0580,-0.4408);
  \draw[rulecardline] (-0.6148,0.0928) rectangle (0.6728,-0.4408);
  \fill[rulegold] (-0.0261,-0.4408) -- (0.0841,-0.4408) -- (0.0290,-0.3973) -- cycle;
  \draw[white, line width=1.5pt] (0.0290,-0.3190) circle (0.1102);
  \draw[rulegold, line width=0.7pt] (0.0290,-0.3190) circle (0.1102);\end{tikzpicture}
\\[3.5mm]
\stlabel{$w=4$} & \stlabel{$w=5$} & \stlabel{$w=6$}
\\[3pt]
\begin{tikzpicture}[scale=2.8224]  \fill[rulecardfill] (-0.6728,0.0928) rectangle (0.7308,-0.4408);
  \fill[ruleband] (0.0000,0.0928) rectangle (0.0580,-0.4408);
  \fill[ruleink] (-0.2320,0.0000) rectangle ++(0.0580,-0.0580);
  \fill[ruleink] (0.2320,0.0000) rectangle ++(0.0580,-0.0580);
  \fill[ruleink] (0.1740,-0.0580) rectangle ++(0.0580,-0.0580);
  \fill[ruleink] (0.2320,-0.0580) rectangle ++(0.0580,-0.0580);
  \fill[ruleink] (0.2900,-0.0580) rectangle ++(0.0580,-0.0580);
  \fill[ruleink] (-0.2900,-0.0580) rectangle ++(0.0580,-0.0580);
  \fill[ruleink] (-0.2320,-0.0580) rectangle ++(0.0580,-0.0580);
  \fill[ruleink] (-0.1740,-0.0580) rectangle ++(0.0580,-0.0580);
  \fill[ruleink] (0.1160,-0.1160) rectangle ++(0.0580,-0.0580);
  \fill[ruleink] (0.1740,-0.1160) rectangle ++(0.0580,-0.0580);
  \fill[ruleink] (0.3480,-0.1160) rectangle ++(0.0580,-0.0580);
  \fill[ruleink] (-0.3480,-0.1160) rectangle ++(0.0580,-0.0580);
  \fill[ruleink] (-0.2900,-0.1160) rectangle ++(0.0580,-0.0580);
  \fill[ruleink] (-0.1160,-0.1160) rectangle ++(0.0580,-0.0580);
  \fill[ruleink] (0.0580,-0.1740) rectangle ++(0.0580,-0.0580);
  \fill[ruleink] (0.1160,-0.1740) rectangle ++(0.0580,-0.0580);
  \fill[ruleink] (0.2320,-0.1740) rectangle ++(0.0580,-0.0580);
  \fill[ruleink] (0.2900,-0.1740) rectangle ++(0.0580,-0.0580);
  \fill[ruleink] (0.3480,-0.1740) rectangle ++(0.0580,-0.0580);
  \fill[ruleink] (0.4060,-0.1740) rectangle ++(0.0580,-0.0580);
  \fill[ruleink] (-0.0580,-0.1740) rectangle ++(0.0580,-0.0580);
  \fill[ruleink] (-0.4060,-0.1740) rectangle ++(0.0580,-0.0580);
  \fill[ruleink] (-0.3480,-0.1740) rectangle ++(0.0580,-0.0580);
  \fill[ruleink] (-0.2320,-0.1740) rectangle ++(0.0580,-0.0580);
  \fill[ruleink] (-0.1740,-0.1740) rectangle ++(0.0580,-0.0580);
  \fill[ruleink] (-0.1160,-0.1740) rectangle ++(0.0580,-0.0580);
  \fill[ruleink] (0.0580,-0.2320) rectangle ++(0.0580,-0.0580);
  \fill[ruleink] (0.2320,-0.2320) rectangle ++(0.0580,-0.0580);
  \fill[ruleink] (0.4640,-0.2320) rectangle ++(0.0580,-0.0580);
  \fill[ruleink] (-0.4640,-0.2320) rectangle ++(0.0580,-0.0580);
  \fill[ruleink] (-0.2320,-0.2320) rectangle ++(0.0580,-0.0580);
  \fill[ruleink] (-0.4060,-0.2320) rectangle ++(0.0580,-0.0580);
  \fill[ruleink] (0.0000,-0.2900) rectangle ++(0.0580,-0.0580);
  \fill[ruleink] (0.0580,-0.2900) rectangle ++(0.0580,-0.0580);
  \fill[ruleink] (0.1160,-0.2900) rectangle ++(0.0580,-0.0580);
  \fill[ruleink] (0.1740,-0.2900) rectangle ++(0.0580,-0.0580);
  \fill[ruleink] (0.2320,-0.2900) rectangle ++(0.0580,-0.0580);
  \fill[ruleink] (0.2900,-0.2900) rectangle ++(0.0580,-0.0580);
  \fill[ruleink] (0.4060,-0.2900) rectangle ++(0.0580,-0.0580);
  \fill[ruleink] (0.4640,-0.2900) rectangle ++(0.0580,-0.0580);
  \fill[ruleink] (0.5220,-0.2900) rectangle ++(0.0580,-0.0580);
  \fill[ruleink] (-0.5220,-0.2900) rectangle ++(0.0580,-0.0580);
  \fill[ruleink] (-0.4640,-0.2900) rectangle ++(0.0580,-0.0580);
  \fill[ruleink] (-0.3480,-0.2900) rectangle ++(0.0580,-0.0580);
  \fill[ruleink] (-0.2900,-0.2900) rectangle ++(0.0580,-0.0580);
  \fill[ruleink] (-0.2320,-0.2900) rectangle ++(0.0580,-0.0580);
  \fill[ruleink] (-0.1740,-0.2900) rectangle ++(0.0580,-0.0580);
  \draw[rulegold!65, line width=0.3pt] (0.0000,0.0928) -- (0.0000,-0.4408);
  \draw[rulegold!65, line width=0.3pt] (0.0580,0.0928) -- (0.0580,-0.4408);
  \draw[rulecardline] (-0.6728,0.0928) rectangle (0.7308,-0.4408);
  \fill[rulegold] (-0.0261,-0.4408) -- (0.0841,-0.4408) -- (0.0290,-0.3973) -- cycle;
  \draw[white, line width=1.5pt] (0.0290,-0.3190) circle (0.1102);
  \draw[rulegold, line width=0.7pt] (0.0290,-0.3190) circle (0.1102);\end{tikzpicture} &
\begin{tikzpicture}[scale=2.8224]  \fill[rulecardfill] (-0.8468,0.0928) rectangle (0.9048,-0.5568);
  \fill[ruleband] (0.0000,0.0928) rectangle (0.0580,-0.5568);
  \fill[ruleink] (-0.2900,0.0000) rectangle ++(0.0580,-0.0580);
  \fill[ruleink] (0.2900,0.0000) rectangle ++(0.0580,-0.0580);
  \fill[ruleink] (0.2320,-0.0580) rectangle ++(0.0580,-0.0580);
  \fill[ruleink] (0.2900,-0.0580) rectangle ++(0.0580,-0.0580);
  \fill[ruleink] (0.3480,-0.0580) rectangle ++(0.0580,-0.0580);
  \fill[ruleink] (-0.3480,-0.0580) rectangle ++(0.0580,-0.0580);
  \fill[ruleink] (-0.2900,-0.0580) rectangle ++(0.0580,-0.0580);
  \fill[ruleink] (-0.2320,-0.0580) rectangle ++(0.0580,-0.0580);
  \fill[ruleink] (0.1740,-0.1160) rectangle ++(0.0580,-0.0580);
  \fill[ruleink] (0.2320,-0.1160) rectangle ++(0.0580,-0.0580);
  \fill[ruleink] (0.4060,-0.1160) rectangle ++(0.0580,-0.0580);
  \fill[ruleink] (-0.4060,-0.1160) rectangle ++(0.0580,-0.0580);
  \fill[ruleink] (-0.3480,-0.1160) rectangle ++(0.0580,-0.0580);
  \fill[ruleink] (-0.1740,-0.1160) rectangle ++(0.0580,-0.0580);
  \fill[ruleink] (0.1160,-0.1740) rectangle ++(0.0580,-0.0580);
  \fill[ruleink] (0.1740,-0.1740) rectangle ++(0.0580,-0.0580);
  \fill[ruleink] (0.2900,-0.1740) rectangle ++(0.0580,-0.0580);
  \fill[ruleink] (0.3480,-0.1740) rectangle ++(0.0580,-0.0580);
  \fill[ruleink] (0.4060,-0.1740) rectangle ++(0.0580,-0.0580);
  \fill[ruleink] (0.4640,-0.1740) rectangle ++(0.0580,-0.0580);
  \fill[ruleink] (-0.4640,-0.1740) rectangle ++(0.0580,-0.0580);
  \fill[ruleink] (-0.4060,-0.1740) rectangle ++(0.0580,-0.0580);
  \fill[ruleink] (-0.2900,-0.1740) rectangle ++(0.0580,-0.0580);
  \fill[ruleink] (-0.2320,-0.1740) rectangle ++(0.0580,-0.0580);
  \fill[ruleink] (-0.1740,-0.1740) rectangle ++(0.0580,-0.0580);
  \fill[ruleink] (-0.1160,-0.1740) rectangle ++(0.0580,-0.0580);
  \fill[ruleink] (0.0580,-0.2320) rectangle ++(0.0580,-0.0580);
  \fill[ruleink] (0.1160,-0.2320) rectangle ++(0.0580,-0.0580);
  \fill[ruleink] (0.2900,-0.2320) rectangle ++(0.0580,-0.0580);
  \fill[ruleink] (0.5220,-0.2320) rectangle ++(0.0580,-0.0580);
  \fill[ruleink] (-0.5220,-0.2320) rectangle ++(0.0580,-0.0580);
  \fill[ruleink] (-0.4640,-0.2320) rectangle ++(0.0580,-0.0580);
  \fill[ruleink] (-0.2900,-0.2320) rectangle ++(0.0580,-0.0580);
  \fill[ruleink] (-0.0580,-0.2320) rectangle ++(0.0580,-0.0580);
  \fill[ruleink] (0.0580,-0.2900) rectangle ++(0.0580,-0.0580);
  \fill[ruleink] (0.1740,-0.2900) rectangle ++(0.0580,-0.0580);
  \fill[ruleink] (0.2320,-0.2900) rectangle ++(0.0580,-0.0580);
  \fill[ruleink] (0.2900,-0.2900) rectangle ++(0.0580,-0.0580);
  \fill[ruleink] (0.3480,-0.2900) rectangle ++(0.0580,-0.0580);
  \fill[ruleink] (0.4640,-0.2900) rectangle ++(0.0580,-0.0580);
  \fill[ruleink] (0.5220,-0.2900) rectangle ++(0.0580,-0.0580);
  \fill[ruleink] (0.5800,-0.2900) rectangle ++(0.0580,-0.0580);
  \fill[ruleink] (-0.5800,-0.2900) rectangle ++(0.0580,-0.0580);
  \fill[ruleink] (-0.5220,-0.2900) rectangle ++(0.0580,-0.0580);
  \fill[ruleink] (-0.1160,-0.2900) rectangle ++(0.0580,-0.0580);
  \fill[ruleink] (-0.4060,-0.2900) rectangle ++(0.0580,-0.0580);
  \fill[ruleink] (-0.3480,-0.2900) rectangle ++(0.0580,-0.0580);
  \fill[ruleink] (-0.2900,-0.2900) rectangle ++(0.0580,-0.0580);
  \fill[ruleink] (-0.2320,-0.2900) rectangle ++(0.0580,-0.0580);
  \fill[ruleink] (-0.0580,-0.2900) rectangle ++(0.0580,-0.0580);
  \fill[ruleink] (0.0580,-0.3480) rectangle ++(0.0580,-0.0580);
  \fill[ruleink] (0.1740,-0.3480) rectangle ++(0.0580,-0.0580);
  \fill[ruleink] (0.4640,-0.3480) rectangle ++(0.0580,-0.0580);
  \fill[ruleink] (0.6380,-0.3480) rectangle ++(0.0580,-0.0580);
  \fill[ruleink] (-0.6380,-0.3480) rectangle ++(0.0580,-0.0580);
  \fill[ruleink] (-0.5800,-0.3480) rectangle ++(0.0580,-0.0580);
  \fill[ruleink] (-0.4060,-0.3480) rectangle ++(0.0580,-0.0580);
  \fill[ruleink] (-0.1160,-0.3480) rectangle ++(0.0580,-0.0580);
  \fill[ruleink] (0.0000,-0.4060) rectangle ++(0.0580,-0.0580);
  \fill[ruleink] (0.0580,-0.4060) rectangle ++(0.0580,-0.0580);
  \fill[ruleink] (0.1740,-0.4060) rectangle ++(0.0580,-0.0580);
  \fill[ruleink] (0.2320,-0.4060) rectangle ++(0.0580,-0.0580);
  \fill[ruleink] (0.4060,-0.4060) rectangle ++(0.0580,-0.0580);
  \fill[ruleink] (0.4640,-0.4060) rectangle ++(0.0580,-0.0580);
  \fill[ruleink] (0.5220,-0.4060) rectangle ++(0.0580,-0.0580);
  \fill[ruleink] (0.5800,-0.4060) rectangle ++(0.0580,-0.0580);
  \fill[ruleink] (0.6380,-0.4060) rectangle ++(0.0580,-0.0580);
  \fill[ruleink] (0.6960,-0.4060) rectangle ++(0.0580,-0.0580);
  \fill[ruleink] (-0.6960,-0.4060) rectangle ++(0.0580,-0.0580);
  \fill[ruleink] (-0.6380,-0.4060) rectangle ++(0.0580,-0.0580);
  \fill[ruleink] (-0.0580,-0.4060) rectangle ++(0.0580,-0.0580);
  \fill[ruleink] (-0.5220,-0.4060) rectangle ++(0.0580,-0.0580);
  \fill[ruleink] (-0.4640,-0.4060) rectangle ++(0.0580,-0.0580);
  \fill[ruleink] (-0.4060,-0.4060) rectangle ++(0.0580,-0.0580);
  \fill[ruleink] (-0.3480,-0.4060) rectangle ++(0.0580,-0.0580);
  \fill[ruleink] (-0.1740,-0.4060) rectangle ++(0.0580,-0.0580);
  \fill[ruleink] (-0.1160,-0.4060) rectangle ++(0.0580,-0.0580);
  \draw[rulegold!65, line width=0.3pt] (0.0000,0.0928) -- (0.0000,-0.5568);
  \draw[rulegold!65, line width=0.3pt] (0.0580,0.0928) -- (0.0580,-0.5568);
  \draw[rulecardline] (-0.8468,0.0928) rectangle (0.9048,-0.5568);
  \fill[rulegold] (-0.0261,-0.5568) -- (0.0841,-0.5568) -- (0.0290,-0.5133) -- cycle;
  \draw[white, line width=1.5pt] (0.0290,-0.4350) circle (0.1102);
  \draw[rulegold, line width=0.7pt] (0.0290,-0.4350) circle (0.1102);\end{tikzpicture} &
\begin{tikzpicture}[scale=2.8224]  \fill[rulecardfill] (-0.9048,0.0928) rectangle (0.9628,-0.5568);
  \fill[ruleband] (0.0000,0.0928) rectangle (0.0580,-0.5568);
  \fill[ruleink] (-0.3480,0.0000) rectangle ++(0.0580,-0.0580);
  \fill[ruleink] (0.3480,0.0000) rectangle ++(0.0580,-0.0580);
  \fill[ruleink] (0.2900,-0.0580) rectangle ++(0.0580,-0.0580);
  \fill[ruleink] (0.3480,-0.0580) rectangle ++(0.0580,-0.0580);
  \fill[ruleink] (0.4060,-0.0580) rectangle ++(0.0580,-0.0580);
  \fill[ruleink] (-0.4060,-0.0580) rectangle ++(0.0580,-0.0580);
  \fill[ruleink] (-0.3480,-0.0580) rectangle ++(0.0580,-0.0580);
  \fill[ruleink] (-0.2900,-0.0580) rectangle ++(0.0580,-0.0580);
  \fill[ruleink] (0.2320,-0.1160) rectangle ++(0.0580,-0.0580);
  \fill[ruleink] (0.2900,-0.1160) rectangle ++(0.0580,-0.0580);
  \fill[ruleink] (0.4640,-0.1160) rectangle ++(0.0580,-0.0580);
  \fill[ruleink] (-0.4640,-0.1160) rectangle ++(0.0580,-0.0580);
  \fill[ruleink] (-0.2320,-0.1160) rectangle ++(0.0580,-0.0580);
  \fill[ruleink] (-0.4060,-0.1160) rectangle ++(0.0580,-0.0580);
  \fill[ruleink] (0.1740,-0.1740) rectangle ++(0.0580,-0.0580);
  \fill[ruleink] (0.2320,-0.1740) rectangle ++(0.0580,-0.0580);
  \fill[ruleink] (0.3480,-0.1740) rectangle ++(0.0580,-0.0580);
  \fill[ruleink] (0.4060,-0.1740) rectangle ++(0.0580,-0.0580);
  \fill[ruleink] (0.4640,-0.1740) rectangle ++(0.0580,-0.0580);
  \fill[ruleink] (0.5220,-0.1740) rectangle ++(0.0580,-0.0580);
  \fill[ruleink] (-0.5220,-0.1740) rectangle ++(0.0580,-0.0580);
  \fill[ruleink] (-0.4640,-0.1740) rectangle ++(0.0580,-0.0580);
  \fill[ruleink] (-0.3480,-0.1740) rectangle ++(0.0580,-0.0580);
  \fill[ruleink] (-0.2900,-0.1740) rectangle ++(0.0580,-0.0580);
  \fill[ruleink] (-0.2320,-0.1740) rectangle ++(0.0580,-0.0580);
  \fill[ruleink] (-0.1740,-0.1740) rectangle ++(0.0580,-0.0580);
  \fill[ruleink] (0.1160,-0.2320) rectangle ++(0.0580,-0.0580);
  \fill[ruleink] (0.1740,-0.2320) rectangle ++(0.0580,-0.0580);
  \fill[ruleink] (0.3480,-0.2320) rectangle ++(0.0580,-0.0580);
  \fill[ruleink] (0.5800,-0.2320) rectangle ++(0.0580,-0.0580);
  \fill[ruleink] (-0.5800,-0.2320) rectangle ++(0.0580,-0.0580);
  \fill[ruleink] (-0.5220,-0.2320) rectangle ++(0.0580,-0.0580);
  \fill[ruleink] (-0.3480,-0.2320) rectangle ++(0.0580,-0.0580);
  \fill[ruleink] (-0.1160,-0.2320) rectangle ++(0.0580,-0.0580);
  \fill[ruleink] (0.0580,-0.2900) rectangle ++(0.0580,-0.0580);
  \fill[ruleink] (0.1160,-0.2900) rectangle ++(0.0580,-0.0580);
  \fill[ruleink] (0.2320,-0.2900) rectangle ++(0.0580,-0.0580);
  \fill[ruleink] (0.2900,-0.2900) rectangle ++(0.0580,-0.0580);
  \fill[ruleink] (0.3480,-0.2900) rectangle ++(0.0580,-0.0580);
  \fill[ruleink] (0.4060,-0.2900) rectangle ++(0.0580,-0.0580);
  \fill[ruleink] (0.5220,-0.2900) rectangle ++(0.0580,-0.0580);
  \fill[ruleink] (0.5800,-0.2900) rectangle ++(0.0580,-0.0580);
  \fill[ruleink] (0.6380,-0.2900) rectangle ++(0.0580,-0.0580);
  \fill[ruleink] (-0.6380,-0.2900) rectangle ++(0.0580,-0.0580);
  \fill[ruleink] (-0.5800,-0.2900) rectangle ++(0.0580,-0.0580);
  \fill[ruleink] (-0.1160,-0.2900) rectangle ++(0.0580,-0.0580);
  \fill[ruleink] (-0.4640,-0.2900) rectangle ++(0.0580,-0.0580);
  \fill[ruleink] (-0.4060,-0.2900) rectangle ++(0.0580,-0.0580);
  \fill[ruleink] (-0.3480,-0.2900) rectangle ++(0.0580,-0.0580);
  \fill[ruleink] (-0.2900,-0.2900) rectangle ++(0.0580,-0.0580);
  \fill[ruleink] (-0.1740,-0.2900) rectangle ++(0.0580,-0.0580);
  \fill[ruleink] (-0.0580,-0.2900) rectangle ++(0.0580,-0.0580);
  \fill[ruleink] (0.0580,-0.3480) rectangle ++(0.0580,-0.0580);
  \fill[ruleink] (0.2320,-0.3480) rectangle ++(0.0580,-0.0580);
  \fill[ruleink] (0.5220,-0.3480) rectangle ++(0.0580,-0.0580);
  \fill[ruleink] (0.6960,-0.3480) rectangle ++(0.0580,-0.0580);
  \fill[ruleink] (-0.6960,-0.3480) rectangle ++(0.0580,-0.0580);
  \fill[ruleink] (-0.6380,-0.3480) rectangle ++(0.0580,-0.0580);
  \fill[ruleink] (-0.4640,-0.3480) rectangle ++(0.0580,-0.0580);
  \fill[ruleink] (-0.1740,-0.3480) rectangle ++(0.0580,-0.0580);
  \fill[ruleink] (0.0000,-0.4060) rectangle ++(0.0580,-0.0580);
  \fill[ruleink] (0.0580,-0.4060) rectangle ++(0.0580,-0.0580);
  \fill[ruleink] (0.1160,-0.4060) rectangle ++(0.0580,-0.0580);
  \fill[ruleink] (0.1740,-0.4060) rectangle ++(0.0580,-0.0580);
  \fill[ruleink] (0.2320,-0.4060) rectangle ++(0.0580,-0.0580);
  \fill[ruleink] (0.2900,-0.4060) rectangle ++(0.0580,-0.0580);
  \fill[ruleink] (0.4640,-0.4060) rectangle ++(0.0580,-0.0580);
  \fill[ruleink] (0.5220,-0.4060) rectangle ++(0.0580,-0.0580);
  \fill[ruleink] (0.5800,-0.4060) rectangle ++(0.0580,-0.0580);
  \fill[ruleink] (0.6380,-0.4060) rectangle ++(0.0580,-0.0580);
  \fill[ruleink] (0.6960,-0.4060) rectangle ++(0.0580,-0.0580);
  \fill[ruleink] (0.7540,-0.4060) rectangle ++(0.0580,-0.0580);
  \fill[ruleink] (-0.7540,-0.4060) rectangle ++(0.0580,-0.0580);
  \fill[ruleink] (-0.6960,-0.4060) rectangle ++(0.0580,-0.0580);
  \fill[ruleink] (-0.5800,-0.4060) rectangle ++(0.0580,-0.0580);
  \fill[ruleink] (-0.5220,-0.4060) rectangle ++(0.0580,-0.0580);
  \fill[ruleink] (-0.4640,-0.4060) rectangle ++(0.0580,-0.0580);
  \fill[ruleink] (-0.4060,-0.4060) rectangle ++(0.0580,-0.0580);
  \fill[ruleink] (-0.2320,-0.4060) rectangle ++(0.0580,-0.0580);
  \fill[ruleink] (-0.1740,-0.4060) rectangle ++(0.0580,-0.0580);
  \fill[ruleink] (-0.1160,-0.4060) rectangle ++(0.0580,-0.0580);
  \draw[rulegold!65, line width=0.3pt] (0.0000,0.0928) -- (0.0000,-0.5568);
  \draw[rulegold!65, line width=0.3pt] (0.0580,0.0928) -- (0.0580,-0.5568);
  \draw[rulecardline] (-0.9048,0.0928) rectangle (0.9628,-0.5568);
  \fill[rulegold] (-0.0261,-0.5568) -- (0.0841,-0.5568) -- (0.0290,-0.5133) -- cycle;
  \draw[white, line width=1.5pt] (0.0290,-0.4350) circle (0.1102);
  \draw[rulegold, line width=0.7pt] (0.0290,-0.4350) circle (0.1102);\end{tikzpicture}
\end{tabular}
\\[3mm]
\caption{The zero-center horizon is flat across each pair $w=2k-1,2k$, the
parity effect of Theorem~\ref{thm:horizon}: raising the radius from odd to even
buys no extra time.  One extremizer per radius $w=1,\ldots,6$, truncated from
the zero-trace completion at left depth $w$.  Time runs downward, and the
circled cell is the first nonzero center, occurring at time
$2\lceil w/2\rceil+1$.}
\label{fig:horizon}
\end{figure}

\section{A sharp finite horizon}

Corollary~\ref{cor:finite} says a finite row's center cannot stay zero forever.
How long it can stay zero, as a function of the support radius, is exactly one
step past the light cone of the first odd depth beyond that radius.

For $w\geq1$, let
$\mathcal S_w=\{x\neq0:\supp(x)\subseteq[-w,w],\ x_0=0\}$.  For
$x\in\mathcal S_w$, the zero-center horizon is defined as follows.

\begin{equation}\label{eq:horizon}
 H(x)=\max\{h:F^t(x)_0=0\text{ for }0\leq t\leq h\}
\end{equation}

Thus $H(x)$ is the last matching time index; the corresponding prefix contains
$H(x)+1$ symbols, including the symbol at time zero.

\begin{theorem}[Sharp finite horizon]\label{thm:horizon}
Let $w\geq1$.  Then both of the following hold.

\begin{equation}\label{eq:sharp}
 \max_{x\in\mathcal S_w}H(x)=2\left\lceil\frac w2\right\rceil,
 \qquad
 \#\!\left\{x\in\mathcal S_w:H(x)=2\left\lceil\frac w2\right\rceil\right\}
 =2^w-1
\end{equation}
\end{theorem}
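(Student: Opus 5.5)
The plan is to compare each $x\in\mathcal S_w$ with the zero-trace completion of its own right half given by Theorem~\ref{thm:fiber}, using the finite form of Lemma~\ref{lem:unique} in both directions. Set $d:=2\lceil w/2\rceil+1$. This is the smallest odd integer exceeding $w$: for $w=2k$ it is $2k+1$, and for $w=2k-1$ it is also $2k+1$. Write $R=(R_1,\dots,R_w)$ for the positive half of $x$ (recall $x_0=0$). Let $y$ be the unique zero-trace row with $y_j=x_j$ for $j\ge0$.

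\emph{Upper bound.} First suppose $R\neq0$, and let $m\le w$ be its first one. By \eqref{eq:pattern}, $y_{-j}=j\bmod 2$ for every $j>m$, so $y_{-d}=1$, while $x_{-d}=0$ because $d>w$. If $H(x)\ge d$, then $x$ and $y$ agree on $i\ge0$ and their traces agree for $0\le t\le d$. The finite form of Lemma~\ref{lem:unique} then forces $x_{-d}=y_{-d}$, a contradiction, so $H(x)\le d-1=2\lceil w/2\rceil$. Now suppose $R=0$. Then $y=0$ and $x$ has a one at some left depth $\le w$. The same finite form against the zero row shows that $H(x)\ge w$ is impossible, so $H(x)\le w-1<2\lceil w/2\rceil$. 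In particular, every extremizer has $R\neq0$.

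\emph{Attainment and counting.} For each nonzero $R\in\{0,1\}^w$, let $x^R$ agree with $y$ on $[-w,\infty)$ and vanish left of $-w$. Then $x^R\in\mathcal S_w$. The rows $x^R$ and $y$ agree at every $i\ge -w$, and their rightmost disagreement is at $-d$: the depth $w+1$ is even when $w$ is odd, and there $y$ vanishes by \eqref{eq:pattern}, since $m<w+1$. I would then rerun the light-cone argument of the lemma quantitatively. Agreement on $[-d+1,\infty)$ persists on $[-d+1+t,\infty)$ through time $t$, so the centers agree for $t<d$. This gives $F^t(x^R)_0=0$ for $0\le t\le d-1$, hence $H(x^R)=d-1$ by the upper bound. (Left permutivity moreover carries the discrepancy to $(0,d)$, which is the circled cell of Figure~\ref{fig:horizon}, though that is not needed.)

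Conversely, any $x$ with $H(x)=d-1\ge w$ has $R\ne0$. The finite lemma with $r=d-1\ge w$ forces $x_{-j}=y_{-j}$ for $1\le j\le w$, and the support condition fixes $x$ beyond depth $w$, so $x=x^R$. Since $R\mapsto x^R$ is injective on nonzero $R$, the extremizers number exactly $2^w-1$.

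The step needing the most care is the quantitative use of Lemma~\ref{lem:unique}. One must check that agreement on $[-d+1,\infty)$ gives trace agreement for exactly $t\le d-1$; this is the locality half, whose index bookkeeping is off-by-one sensitive. One must also verify the parity claim that $y$ has no one at depth $w+1$ when $w$ is odd, which is what makes the horizon flat across each pair $w=2k-1,2k$.
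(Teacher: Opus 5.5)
Your proposal is correct and follows the paper's proof essentially step for step. The upper bound compares $x$ with its zero-trace completion via finite-prefix uniqueness, and handles a zero right half against the zero row. Attainment truncates that completion at depth $w$, with the parity check at depth $w+1$, and the count uses finite-prefix uniqueness again. You spell out the light-cone bookkeeping and the injectivity of $R\mapsto x^R$ more explicitly than the paper does.
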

\begin{proof}
Let $q$ be the least odd integer greater than $w$, so
$q-1=2\lceil w/2\rceil$.  If the positive right half is nonzero, a trace zero
through time $q$, compared with its zero-trace completion, would force by
finite-prefix uniqueness and \eqref{eq:odd}
$x_{-q}=\bigvee_{j=1}^qR_j=1$, contradicting $q>w$.  If that half is zero,
comparison with the zero row makes every nonzero left cell visible by time
$w$.  Thus $H(x)\leq q-1$.

Conversely, choose any of the $2^w-1$ nonzero words
$(R_1,\ldots,R_w)$, install \eqref{eq:odd}--\eqref{eq:even} through left
depth $w$, and zero all other cells.  This row agrees with its infinite
zero-trace completion through the light cone of time $q-1$; when $w$ is odd,
the extra depth $w+1$ is even and forced to zero.  The missing odd cell at
depth $q$ creates a discrepancy at time $q$, and finite-prefix uniqueness
excludes further extremizers.
\end{proof}

The one-trace horizon is the same computation against \eqref{eq:checker}, and
the parities exchange roles: ones sit at even depths, so the binding depth is
the least \emph{even} one past the radius, and $R_0=1$ already makes the row
nonzero, which is why no extremizer is lost here.

\begin{theorem}[Sharp finite horizon, one]\label{thm:onehorizon}
Let $w\geq1$, let
$\mathcal T_w=\{x:\supp(x)\subseteq[-w,w],\ x_0=1\}$, and for
$x\in\mathcal T_w$ let
$H_1(x)=\max\{h:F^t(x)_0=1\text{ for }0\leq t\leq h\}$.  Let $q$ be the
least even integer greater than $w$.  Then both of the following hold.

\begin{equation}\label{eq:sharpone}
 \max_{x\in\mathcal T_w}H_1(x)=q-1,
 \qquad
 \#\!\left\{x\in\mathcal T_w:H_1(x)=q-1\right\}=2^w
\end{equation}
\end{theorem}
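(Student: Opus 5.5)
The plan mirrors the proof of Theorem~\ref{thm:horizon}, with Theorem~\ref{thm:onefiber} in place of Theorem~\ref{thm:fiber} and the finite form of Lemma~\ref{lem:unique} as the engine. Fix $x\in\mathcal T_w$ and let $y$ denote its \emph{one-trace completion}: $y_i=x_i$ for $i\geq0$, and $y_{-j}$ given by \eqref{eq:checker} for $j\geq1$. By Theorem~\ref{thm:onefiber}, $y$ has trace identically one.

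\textbf{Upper bound.} Suppose $F^t(x)_0=1$ for $0\le t\le q$. Then $x$ and $y$ agree on $i\geq0$ and their traces agree through time $q$. The finite form of Lemma~\ref{lem:unique} forces $x_{-j}=y_{-j}$ for $1\le j\le q$. In particular $x_{-q}=y_{-q}=1$, since $q$ is positive and even. This contradicts $q>w$ and $\supp(x)\subseteq[-w,w]$. Hence $H_1(x)\le q-1$ for every $x\in\mathcal T_w$. Unlike the zero case, no separate treatment of a zero right half is needed, because the checkerboard left half does not depend on $R$.

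\textbf{Attainment.} Take any word $(R_1,\dots,R_w)\in\{0,1\}^w$ and set $R_0=1$. Put $L_j=1$ exactly for even $j$ with $2\le j\le w$, and zero every other cell. This row $x$ lies in $\mathcal T_w$ and agrees with its completion $y$ on $[-(q-1),\infty)$. The only cells in that range it could miss are the depths $j$ with $w<j<q$. The only such depth is $j=w+1$ when $w$ is even; that depth is odd, so $y_{-(w+1)}=0$ anyway.

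By locality (radius one), $F^t(x)_0$ depends only on $x_{-t},\dots,x_t$. So for $t\le q-1$ the center of $x$ equals that of $y$, which is $1$, and therefore $H_1(x)\ge q-1$. Combined with the upper bound, $H_1(x)=q-1$, giving $2^w$ extremizers. For this count we do not even need to prove that the trace fails at time $q$; the upper bound supplies that.

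\textbf{Exactly $2^w$ extremizers.} Conversely, let $x\in\mathcal T_w$ have $H_1(x)=q-1$. The traces of $x$ and $y$ agree through time $q-1$, so the finite form of Lemma~\ref{lem:unique} forces $x_{-j}=y_{-j}$ for $1\le j\le q-1$. Since $q-1\ge w$, this determines the whole left half of $x$ inside the support window as the truncated checkerboard. The right half $(R_1,\dots,R_w)$ is free, and $R_0=1$ is fixed. So the extremizers are exactly the $2^w$ rows constructed above.

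The main point requiring care is the bookkeeping in the finite uniqueness lemma: agreement of traces through time $r$ yields agreement at depths $1,\dots,r$. This is what makes time $q$ bind at depth $q$ for the upper bound, and what makes time $q-1$ determine depths up to $q-1\ge w$ for the count. I would check this indexing against the lemma's last sentence, and also check the edge case $w$ even, where $q=w+2$ and depth $w+1$ is odd, hence automatically zero in the checkerboard.
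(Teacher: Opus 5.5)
Your proposal is correct and follows the paper's own proof essentially step for step. The upper bound comes from the finite form of Lemma~\ref{lem:unique} applied against the checkerboard completion of Theorem~\ref{thm:onefiber}. Attainment comes from truncating \eqref{eq:checker} at depth $w$ and using light-cone agreement through time $q-1$. The count comes from finite-prefix uniqueness forcing the left half through depth $q-1\ge w$. Your version only makes explicit the indexing and the even-$w$ edge case that the paper states tersely.
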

\begin{proof}
A trace one through time $q$, compared with the all-one completion of
Theorem~\ref{thm:onefiber}, would force by finite-prefix uniqueness and
\eqref{eq:checker} $x_{-q}=1$, contradicting $q>w$; thus $H_1(x)\leq q-1$.
Conversely, choose any of the $2^w$ words $(R_1,\ldots,R_w)$, install
\eqref{eq:checker} through left depth $w$, and zero all other cells.  This row
agrees with its all-one completion through the light cone of time $q-1$; when
$w$ is even, the extra depth $w+1$ is odd and forced to zero.  The missing even
cell at depth $q$ creates a discrepancy at time $q$, and finite-prefix
uniqueness excludes further extremizers.
\end{proof}

Combining the two horizons, the sharp law over both constants is flat in the
parity that splits each one separately.

\begin{corollary}\label{cor:bothhorizon}
Let $w\geq1$ and let $x\neq0$ have $\supp(x)\subseteq[-w,w]$.  The longest
constant prefix of the central trace of $x$ has length at most $w+2$, and
exactly $2^w$ configurations attain length $w+2$ when $w$ is even, while
exactly $2^w-1$ attain it when $w$ is odd.
\end{corollary}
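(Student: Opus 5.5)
The plan is to split on the initial center $x_0$ and read off both halves from Theorems~\ref{thm:horizon} and~\ref{thm:onehorizon}. The statement counts prefix length, which is one more than the last matching time index. A nonzero $x$ with $x_0=0$ lies in $\mathcal S_w$, and its longest constant (zero) prefix has length $H(x)+1\le 2\lceil w/2\rceil+1$. A row with $x_0=1$ lies in $\mathcal T_w$; it is automatically nonzero, and its longest constant (one) prefix has length $H_1(x)+1\le q$, where $q$ is the least even integer exceeding $w$, that is, $q=2\lfloor w/2\rfloor+2$. Every nonzero $x$ with support in $[-w,w]$ falls into exactly one of these two classes. The constant prefix necessarily takes the value $x_0$, so no other case arises.

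Next I will compare the two bounds by parity. For $w$ even, the zero-center bound is $w+1$ and the one-center bound is $w+2$. For $w$ odd, the zero-center bound is $(w+1)+1=w+2$ and the one-center bound is $w+1$. In either parity the maximum over both classes is $w+2$, which proves the upper bound. It is attained only within one class: the $x_0=1$ class when $w$ is even, and the $x_0=0$ class when $w$ is odd. The other class falls exactly one short, so it contributes no configurations of length $w+2$.

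The extremizer counts then transfer directly. For $w$ even, the configurations of length $w+2$ are exactly the maximizers of $H_1$ in $\mathcal T_w$, and Theorem~\ref{thm:onehorizon} gives $2^w$ of them. For $w$ odd, they are exactly the maximizers of $H$ in $\mathcal S_w$, and Theorem~\ref{thm:horizon} gives $2^w-1$. The only point needing care is the bookkeeping between ``time index'' and ``prefix length''. By definition~\eqref{eq:horizon}, $H$ is the last matching index, so the prefix length is $H+1$, and the same holds for $H_1$. I must also confirm that the nonzero requirement excludes nothing in the one-center class, which is immediate because $x_0=1$. With those checks done, the corollary is a pure case combination and needs no new dynamics.
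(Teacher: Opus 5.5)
Your proposal is correct and takes the same approach as the paper's proof. You split on $x_0$, read the two sharp horizons off Theorems~\ref{thm:horizon} and~\ref{thm:onehorizon}, compare them by parity, and transfer the extremizer counts from whichever class reaches $w+2$. Your checks that prefix length is the horizon index plus one, and that the nonzero condition excludes nothing when $x_0=1$, are points the paper's shorter proof leaves implicit.
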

\begin{proof}
For $x_0=0$ Theorem~\ref{thm:horizon} gives $2\lceil w/2\rceil$, which is $w$
for even $w$ and $w+1$ for odd $w$; for $x_0=1$
Theorem~\ref{thm:onehorizon} gives $q-1$, which is $w+1$ for even $w$ and $w$
for odd $w$.  Hence the maximum horizon index is $w+1$, so the maximum prefix
length is $w+2$.  It is attained in the one case when $w$ is even and in the
zero case when $w$ is odd, with the corresponding counts $2^w$ and $2^w-1$.
\end{proof}

\section{Conclusion}

Theorems~\ref{thm:fiber} and~\ref{thm:onefiber} classify the two constant fibers
of the trace map, and Corollaries~\ref{cor:finite} and~\ref{cor:constant} show
that neither meets the finite configurations away from the origin.  The next
unresolved case is eventual period two; we make no claim about it or any higher
period.

One structural fact bounds what this method reaches.  Write $H(p,w)$ for the
sharp maximum length of an eventually $p$-periodic central prefix over rows of
support radius $w$, so that Theorems~\ref{thm:horizon}
and~\ref{thm:onehorizon} give $H(1,w)=w+2$.  At $p=2$ no bounded law can exist:
left permutivity supplies, for any prescribed trace of length $N$ and any
compatible right prefix, a unique left half and hence a finite row of support
radius $\approx N$ realizing that trace through time $N$, so $H(2,w)\ge w$ for
every $w$.  The constant case is therefore not the first of a uniform family.
The $p=1$ horizon is finite and sharp in $w$; the $p=2$ horizon grows at least
linearly, and a structural account of period-two exclusion remains open.

\appendix
\section{Reproducibility and formalization}\label{app:repro}

The theorems above are proved analytically.  The computations recorded here
corroborate them and generate the figures; no proof depends on them.

An independent numerical audit reproduces the sharp horizon and the extremizer
counts of Theorems~\ref{thm:horizon} and~\ref{thm:onehorizon}.  For support
radius $w$, write $H_0$ and $H_1$ for the maximum horizon index
\eqref{eq:horizon} when the initial center is zero and one respectively, each
with the number of configurations attaining it.

\begin{center}
\small
\begin{tabular}{@{}rrrrrrr@{}}
\toprule
$w$ & $H_0$ & count & $H_1$ & count & $\max H$ & count\\
\midrule
1&2&1&1&2&2&1\\
2&2&3&3&4&3&4\\
3&4&7&3&8&4&7\\
4&4&15&5&16&5&16\\
5&6&31&5&32&6&31\\
6&6&63&7&64&7&64\\
7&8&127&7&128&8&127\\
\bottomrule
\end{tabular}
\end{center}

Prefix lengths are these indices plus one, so $\max H+1=w+2$ at every radius.
The two horizons alternate which parity leads: $H_0$ is larger for odd $w$,
$H_1$ for even $w$.  The final column confirms
Corollary~\ref{cor:bothhorizon}, $2^w$ extremizers for even $w$ and $2^w-1$ for
odd $w$.

Every cell in Figures~\ref{fig:panels} and~\ref{fig:horizon} is generated from
\eqref{eq:rule30}.  Figure~\ref{fig:panels} exhibits an all-zero center for the
displayed $\mathcal C_3$ row, an all-zero Rule-90 center for the row with ones
at $\{-1,1\}$, and escape of that same row under Rule 30.  The six panels of
Figure~\ref{fig:horizon}, for $w=1,\ldots,6$, have first nonzero center times
$3,3,5,5,7,7$.

The accompanying \texttt{Rule30ZeroTail.lean} is a partial formalization.  It
formalizes the local equation and left permutivity, propagation of a rightmost
discrepancy and triangular unit sensitivity, the prefix-OR closed form assuming
a first positive right-hand one, the local Boolean identities for the
$\mathcal C_m$ invariant, and the non-left-bounded conclusion conditional on
the classification disjunction.  It does \emph{not} derive that disjunction from
an all-zero trace, and it does not formalize the all-one fiber, the finite-support
corollaries, or either sharp horizon and extremizer count.  The formalization is
therefore supporting verification, not a machine-checked proof of the main
theorems.

\paragraph{Code availability.}
The audit script, the figure generator, the executable tests and the partial
Lean formalization accompany this submission as ancillary files.

\paragraph{Use of AI tools.}
The model \texttt{anthropic/claude-opus-5} assisted with drafting, with the audit
and figure scripts, and with locating references.  The author derived and
checked every definition, theorem, proof, and numerical value independently of
these tools, verified every reference against its primary source, and takes full
responsibility for all contents of this paper irrespective of how they were
generated.

\section*{Acknowledgments}
I thank Jarkko Kari for reading an earlier version of this note, and for
suggesting the exclusion of nonconstant periodic traces as the natural next
step.

\end{document}